\newcommand\rank{\operatorname{rank}}
\renewcommand\P{{\mathbf{P}}}
\newcommand\E{{\mathbf{E}}}
\newcommand\eps{{\varepsilon}}
\newcommand\Span{\operatorname{Span}}
\newcommand\CE{{\mathcal E}}
\newcommand\CN{{\mathcal N}}
\theoremstyle{plain}
  \newtheorem{theorem}{Theorem}
  \newtheorem{problem}[theorem]{Problem}
  \newtheorem{lemma}[theorem]{Lemma}
  \newtheorem{corollary}[theorem]{Corollary}
  \newtheorem{question}[theorem]{Question}
\theoremstyle{definition}
  \newtheorem{remark}[theorem]{Remark}
\begin{document}

\title{Singular vectors under random perturbation}

\author{Van Vu}
\address{Department of Mathematics, Rutgers, Piscataway, NJ 08854}
\email{vanvu@math.rutgers.edu}

\thanks{V. Vu is supported by research grants DMS-0901216 and AFOSAR-FA-9550-09-1-0167.}


\begin{abstract} 
Computing the first  few  singular vectors of a large matrix is a problem 
that frequently comes up in statistics and numerical analysis. Given the presence of noise, exact calculation is hard to achieve, and the following problem is of importance: 

\vskip2mm

\centerline {\it How much a small perturbation to the matrix  changes the singular vectors ? }

\vskip2mm

Answering this question, classical theorems, such as those of Davis-Kahan and Wedin, give tight estimates for the worst-case scenario. 
In this paper, we show that if the perturbation (noise) is random and our matrix has low rank, then 
better estimates can be obtained. Our method relies on high dimensional geometry and is  different from those used an earlier papers.  \end{abstract}

\maketitle

{\it MSC indices: 65F15, 15A42, 62H30}

\section{Introduction} 

An important  problem that appears 
  in various areas of applied mathematics 
  (in particular statistics, computer science and numerical analysis)  is to compute  the first  few  singular vectors of a large matrix. Among others, this problem lies at the heart  of PCA (Principal Component Analysis), which has a 
   very wide range of applications (for many examples, see  \cite{KVbook, LR} 
   and the references therein).

\vskip2mm

The basic setting  of the problem is as follows:

\vskip2mm

\begin{problem}  Given a   matrix $A$ of size $n \times n $  with 
singular values $\sigma_1 \ge  \dots \ge \sigma_n \ge 0$. 
Let $v_1, \dots, v_n$ be the corresponding (unit) singular vectors.  Compute
$v_1, \dots, v_k$, for some $k \le n$.  \end{problem}

Typically  $n$ is  large 
and $k$ is relatively small. As a matter of fact, in many applications  $k$ is   a constant independent of $n$. 
For example, to obtain a visualization of a large set of data, 
one often sets $k=2$ or $3$.  The assumption that $A$ is a square matrix is for convenience and our analysis 
can be carried out with nominal modification for rectangular matrices. 

We use asymptotic notation such as $\Theta, \Omega, O$ under the assumption that 
$n \rightarrow \infty$.  The vectors $v_1, \dots, v_k$ are not unique. However, if  $\sigma_1, \dots, \sigma_k$ are different, then they  are determined up to the sign. 
We assume this is the case  in all discussions. (In fact, as the reader will see,  the gap $\delta_i := \sigma_i -\sigma_{i+1}$ plays a 
crucial role.) For a vector $v$,  $\|v\|$ denotes its $L_2$ norm. For a matrix $A$,  $\| A \|=\sigma_1 (A)$ denotes 
its spectral norm. 

\vskip2mm

\subsection{Classical perturbation bounds} 

The matrix $A$, which represents some sort of  data, is  often perturbed by noise. Thus, one typically 
works with   $A+E$, where $E$ represents the noise. 
 A  natural and important problem is 
to estimate the influence of noise on the vectors $v_1, \dots, v_k$. We denote by $v_1' , \dots, v_k'$ the first $k$ singular vectors of $A+E$. 

 For sake of 
presentation, we restrict ourselves to the case $k=1$ (the first singular vector). 
Our analysis extends  easily in the general case,  discussed in Section 
\ref{section:extension}. 

The following question is of importance 

\begin{question}  \label{question:fundamental} 
When is    $v_1'$  a good approximation of  $v_1$ ? 
\end{question}

A convenient way to measure the distance between two unit vectors  $v$ and $v'$ is to look at 
 $\sin \angle(v, v')$, where $\angle (v, v') $ is the angle between the vectors, taken in $[0, \pi/2]$.  
To make the problem more quantitative, let us fix a small parameter $\eps >0$, 
which represents a desired accuracy. Our question now  is to find a 
sufficient condition  for the matrix $A$ which guarantees that 
$\sin \angle (v_1, v_1') \le \epsilon$.  It has turned out that 
the key parameter to look at is the gap (or separation) 

$$\delta:= \sigma_1 -\sigma_2, $$
between the first and second singular values of $A$.  
Classical  results in numerical linear algebra yield


\begin{corollary} \label{epsC1} 
For any given $\eps >0$, there is $C=C(\eps)>0$ such that if $\delta \ge C \|E\|$, then 

$$ \sin \angle(v_1, v'_1 ) \le \epsilon. $$
\end{corollary}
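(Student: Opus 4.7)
The plan is to derive this corollary directly from the classical Wedin $\sin\theta$ theorem (the singular-value analogue of Davis--Kahan mentioned in the abstract), combined with Weyl's inequality to control the perturbed singular values. All of these are standard tools from numerical linear algebra, so the argument amounts to bookkeeping with constants rather than a genuinely new estimate.

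First, I would invoke Weyl's inequality $|\sigma_i(A+E) - \sigma_i(A)| \le \|E\|$ to pass from the given gap $\delta = \sigma_1 - \sigma_2$ of $A$ to an ``effective gap'' involving the perturbed matrix. Writing $\sigma_i' := \sigma_i(A+E)$, this gives $\sigma_1 - \sigma_2' \ge \delta - \|E\|$ and, symmetrically, $\sigma_1' - \sigma_2 \ge \delta - \|E\|$; these are precisely the quantities that appear in Wedin's hypothesis. Next, Wedin's theorem yields a bound of the shape
$$\sin \angle(v_1, v_1') \le \frac{\|E\|}{\delta - \|E\|},$$
valid whenever $\delta > \|E\|$. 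To finish, I pick $C = C(\eps)$ large enough that $\delta \ge C\|E\|$ forces the right-hand side below $\eps$; for instance $C = 1 + 1/\eps$ works, since then $\delta - \|E\| \ge \|E\|/\eps$ and hence the ratio is at most $\eps$.

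There is no serious obstacle here: the corollary is just a repackaging of Wedin's inequality into the ``$\eps$-accuracy with constant $C$'' form used throughout the paper. The real work --- and the point of the paper as advertised in the abstract --- is to replace the deterministic requirement $\delta \gtrsim \|E\|/\eps$, which for small $\eps$ and large $\|E\|$ is quite stringent, by a much weaker condition when $E$ is random and $A$ has low rank. That improvement is where the high-dimensional geometric arguments of the later sections come in, and where the technical difficulty lies.
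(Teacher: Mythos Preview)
Your proposal is correct and matches the paper's approach: the corollary is stated immediately before Theorem~\ref{theorem:SS} (Wedin's $\sin$ theorem) with the remark that it ``follows from'' that result, so the intended proof is exactly to invoke Wedin and choose $C(\eps)$ appropriately. Your extra step through Weyl's inequality to reach an effective gap $\delta - \|E\|$ is just an explicit unpacking of what the paper absorbs into the constant in \eqref{upper1}; the two versions are equivalent up to the value of $C$.
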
 

This follows from a well known result of Wedin

\begin{theorem}  \label{theorem:SS} (Wedin $\sin$ theorem) There is a  positive constant $ C$ such that

\begin{equation}  \label{upper1} \sin \angle(v_1, v'_1 )   \le  C \frac{ \| E\| }{\delta }. \end{equation} 
\end{theorem}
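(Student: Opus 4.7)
The plan is to reduce Wedin's bound to a Davis--Kahan style $\sin\theta$ estimate for a Hermitian matrix, and to establish that Hermitian estimate directly by an eigenvector expansion. Two reductions are natural: passing to the Gram matrix $A^*A$, or passing to the $2n\times 2n$ Jordan--Wielandt dilation
$$ \widetilde A = \begin{pmatrix} 0 & A \\ A^* & 0 \end{pmatrix}. $$
I would take the dilation route, because it preserves the gap $\delta$ exactly and avoids the quadratic-in-$\|E\|$ artifacts that passing to $A^*A$ introduces.

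The eigenvalues of $\widetilde A$ are $\pm\sigma_i(A)$, with the top eigenvalue $+\sigma_1$ having eigenvector $w_1 = \tfrac{1}{\sqrt 2}\binom{u_1}{v_1}$; the dilation $\widetilde E$ of $E$ satisfies $\|\widetilde E\| = \|E\|$ and $\widetilde{A+E} = \widetilde A + \widetilde E$. Since $\sigma_2 \ge 0 > -\sigma_1$, the nearest eigenvalue of $\widetilde A$ to $\sigma_1$ is $\sigma_2$, so the spectral gap at $\sigma_1$ is exactly $\delta$. Now argue in the Davis--Kahan manner: expand the top eigenvector $w_1'$ of $\widetilde A + \widetilde E$ in the orthonormal eigenbasis $\{\omega_i\}$ of $\widetilde A$, say $w_1' = \sum_i c_i \omega_i$ with $\widetilde A \omega_i = \lambda_i \omega_i$, and use $(\widetilde A + \widetilde E) w_1' = \sigma_1' w_1'$ to get $\sum_i c_i(\lambda_i - \sigma_1')\omega_i = -\widetilde E w_1'$. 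Taking norms and dropping the $i=1$ term yields
$$ (\delta - \|E\|)^2 \sum_{i\ne 1} c_i^2 \;\le\; \sum_{i\ne 1} c_i^2(\lambda_i - \sigma_1')^2 \;\le\; \|E\|^2, $$
where Weyl's inequality $|\sigma_1' - \sigma_1| \le \|E\|$ bounds the gap on the $i\ne 1$ coefficients from below by $\delta - \|E\|$. Hence $\sin\angle(w_1, w_1') = (\sum_{i\ne 1} c_i^2)^{1/2} \le \|E\|/(\delta - \|E\|)$; in the regime $\|E\| \le \delta/2$ this gives $\le 2\|E\|/\delta$, and in the complementary regime $\sin\angle \le 1 \le 2\|E\|/\delta$ trivially.

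The remaining step translates the dilation bound into a bound on $\sin\angle(v_1, v_1')$. Fix signs so that $b := \langle v_1, v_1'\rangle \ge 0$ and set $a := \langle u_1, u_1'\rangle$. Using $u_1' = (A+E)v_1'/\sigma_1'$, the identity $Av_1 = \sigma_1 u_1$, orthogonality $u_j \perp u_1$ for $j \ge 2$, and Weyl, one finds $a \ge (b\sigma_1 - \|E\|)/\sigma_1'$, so $a \ge 0$ outside a regime $b \lesssim \|E\|/\delta$ in which the desired bound is anyway trivial. With $a,b \ge 0$ the elementary estimate $1 - \tfrac{(a+b)^2}{4} \ge \tfrac{1}{2}(1-b^2)$ (from $2ab \le a^2+b^2$) gives
$$ \sin\angle(v_1, v_1') \;\le\; \sqrt{2}\,\sin\angle(w_1, w_1') \;\le\; \frac{C\|E\|}{\delta}. $$
The main obstacle I anticipate is precisely this sign-compatibility bookkeeping: the dilation packages left and right singular vectors into a single eigenvector, so transferring the angle bound cleanly requires verifying that the SVD-induced sign normalization forces $a$ and $b$ to agree in sign in the nontrivial regime. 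Everything else---Weyl's inequality, the Parseval-style expansion, and the trivial-regime splits---is routine.
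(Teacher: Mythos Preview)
The paper does not give its own proof of this statement: Theorem~\ref{theorem:SS} is quoted as a classical result, attributed to Wedin (extending Davis--Kahan) with a pointer to \cite{Wed} and \cite[Chapter~8]{GLbook}. So there is nothing to compare against; your task is simply to produce a valid argument.

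Your dilation route is correct and standard. The Hermitian Davis--Kahan step is clean: the gap for the top eigenvalue of $\widetilde A$ is exactly $\delta$, Weyl controls $|\sigma_1'-\sigma_1|$, and the Parseval computation gives $\sin\angle(w_1,w_1')\le \|E\|/(\delta-\|E\|)$ when $\|E\|<\delta$, with the complementary regime handled trivially.

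Your only stated worry, the sign bookkeeping in passing from $\angle(w_1,w_1')$ to $\angle(v_1,v_1')$, is in fact a non-issue, and the detour through $a\ge(b\sigma_1-\|E\|)/\sigma_1'$ is unnecessary (and your claim that ``the desired bound is anyway trivial'' when $b$ is small is not actually justified as written). The inequality you want,
\[
1-\frac{(a+b)^2}{4}\;\ge\;\frac{1-b^2}{2},
\]
is equivalent to $(a+b)^2\le 2+2b^2$, and this follows from $(a+b)^2\le 2(a^2+b^2)\le 2(1+b^2)$ using only $|a|\le 1$, with no sign hypothesis on $a$ or $b$ whatsoever. Hence $\sin\angle(v_1,v_1')\le\sqrt{2}\,\sin\angle(w_1,w_1')$ holds unconditionally, and you may drop the entire sign discussion.
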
 

In the case when $A$ and $A+E$ are hermitian, this statement is a special case of 
the famous  Davis-Kahan $\sin \theta$ theorem. 
Wedin \cite{Wed} extended Davis-Kahan theorem to non-hermitian matrices, resulting 
in a general theorem that contains 
Theorem \ref{theorem:SS} as  a special case 
(see  \cite[Chapter 8]{GLbook}  for more discussion and history).



Let us consider  the following simple, but illustrative  example \cite{Bbook}. 
Let $A$ be the matrix 
\[ 
\begin{pmatrix}
1+\epsilon & 0 \\
0 & 1-\epsilon 
\end{pmatrix} .\]

Apparently, the singular values of $A$ are $1+\epsilon$ and $1-\epsilon$, with corresponding singular vectors  $(1,0)$ and $(0,1)$.
Let $E$ be 

\[ 
\begin{pmatrix}
-\epsilon & \epsilon \\
\epsilon & \epsilon 
\end{pmatrix} ,\] where $\epsilon$ is a small positive number. 
The perturbed matrix $A+E$ has the form 

\[ 
\begin{pmatrix}
1 & \epsilon \\
\epsilon & 1
\end{pmatrix}. \]

Obviously,  the singular values $A+E$ are also $1+\epsilon$ and $1-\epsilon$. However, the  corresponding 
singular vectors  now are $(\frac{1}{\sqrt 2} ,\frac{1}{\sqrt 2})$ and $(\frac{1}{\sqrt 2}, -\frac{1}{\sqrt 2})$, no matter how small $\epsilon$ is. 
This example shows that the consideration of the gap $\delta$ is necessary, and also that Theorem \ref{theorem:SS} is sharp, up to a 
constant factor. 

\vskip2mm 

\subsection{Random perturbation} 
Noise (or perturbation) represents errors that come from various sources 
which are frequently of   entirely different nature, such as errors 
occurring in measurements, errors occurring in recording and transmitting data, errors 
occurring by rounding etc. It is usually too complicated to model  noise deterministically, 
 so in practice, one often assumes that it  is random. In particular, a popular model is that 
the entries of $E$ are independent random variables with mean 0 and variance 1 (the value $1$  is, of course,  just matter of normalization).

For simplicity, we restrict ourselves to a representative case  when all entries 
of $E$ are iid Bernoulli random variables, taking values $\pm 1$ with probability half. For the treatment of more general models, see Section \ref{section:extension}.

\begin{remark} \label{remark:Bernoulli} We prefer the Bernoulli model over  the gaussian one
for two reasons.  
First, we believe that in  many real-life applications,
 noise must  have discrete nature (after all, data are finite). So it seems reasonable to use  
random variables with discrete support to model noise, and Bernoulli is the simplest such a variable.   Second,  as the reader will see, the analysis  for the Bernoulli model easily  extends to many other models of random matrices (including the gaussian one). On the other hand, 
the analysis for gaussian matrices often relies on  special properties of the gaussian measure  which are not available in other cases. \end{remark}

We say that an event $\CE$ holds almost surely if $\P(\CE)=1 -o(1)$; in other words, the probability that $\CE$ holds tends to one as $n$ tends to infinity. 
It is well-known that the norm of a random Bernoulli matrix 
is  of order $\sqrt n$, almost surely (see Lemma \ref{lemma:normE}).  Thus, Theorem \ref{theorem:SS}  implies  
the following variant of Corollary \ref{epsC1}.




\begin{corollary} \label{epsC2} 
For any given $\eps >0$, there is $C=C(\eps)>0$ such that if $\delta \ge C \sqrt{n} $, then 
with probability $1-o(1)$ 

$$ \sin \angle(v_1, v'_1 ) \le \epsilon. $$
\end{corollary}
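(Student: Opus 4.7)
The plan is essentially a one-line combination of the two tools that have just been introduced: Wedin's $\sin$ theorem (Theorem \ref{theorem:SS}) and the (forthcoming) Lemma \ref{lemma:normE} on the spectral norm of a Bernoulli matrix. Neither piece is proved here, so the real work is just bookkeeping of constants.

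First I would invoke Lemma \ref{lemma:normE} to fix an absolute constant $C_0 > 0$ (independent of $n$) such that the event $\CE := \{\|E\| \le C_0 \sqrt{n}\}$ occurs with probability $1 - o(1)$. This is the only place where the probabilistic, Bernoulli-specific information enters the argument; everything afterwards is deterministic.

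Next, on the event $\CE$, I would apply Wedin's inequality \eqref{upper1} to get
\begin{equation*}
\sin \angle(v_1, v_1') \;\le\; C \,\frac{\|E\|}{\delta} \;\le\; C \,\frac{C_0 \sqrt{n}}{\delta},
\end{equation*}
where $C$ is the constant from Theorem \ref{theorem:SS}. Then I would choose $C(\eps) := C C_0 / \eps$, so that the hypothesis $\delta \ge C(\eps)\sqrt{n}$ forces the right-hand side above to be at most $\eps$. Combining, $\sin \angle(v_1, v_1') \le \eps$ holds on $\CE$, hence with probability $1 - o(1)$.

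Since the whole argument is a direct chaining of two cited results, there is no real obstacle; the only subtle point to be careful about is that the constant $C_0$ in the norm bound must be absolute (not depending on $n$), which is exactly what Lemma \ref{lemma:normE} will guarantee. Any sharper or more structural information about $E$ or about $A$ is irrelevant at this level, and indeed the rest of the paper is motivated by the observation that this crude combination is wasteful.
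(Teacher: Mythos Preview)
Your proposal is correct and is exactly the argument the paper has in mind: it states Corollary~\ref{epsC2} immediately after noting that $\|E\|=O(\sqrt n)$ almost surely (Lemma~\ref{lemma:normE}) and invoking Theorem~\ref{theorem:SS}. There is nothing more to it.
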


\subsection{Low dimensional data and improved bounds}

In a large variety of  problems,
the   data is of small dimension, namely, $r:= \rank A \ll n$. 
The main point that we would like to make in this paper is that 
  in this setting,  the lower  bound on $\delta$   can be significantly improved.  
Let us first present the following (improved)  variant of Corollary \ref{epsC2}.

\begin{corollary}  \label{epsC3} 
For any positive constant $\epsilon$ there is a  positive constant $C= C(\epsilon)$ such that the following holds. 
Assume that $A$ has rank $r \le n^{.99}$ and  $\frac{n} {\sqrt{r \log n}} \le \sigma_1$
and  $\delta \ge C \sqrt {r \log n}$. 
Then with probability $1-o(1)$

\begin{equation}  \sin \angle(v_1, v'_1)  \le \epsilon . \end{equation} 

\end{corollary}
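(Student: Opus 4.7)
The plan is to exploit the low-rank structure of $A$ by restricting attention to the $r$-dimensional subspace $W := \Span(v_1, \ldots, v_r)$, which contains $v_1$. The argument proceeds in three steps.

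First, I localize $v_1'$ near $W$. From $B^T u_1' = \sigma_1' v_1'$ (with $B := A+E$ and $u_1'$ the top left singular vector of $B$) together with $A^T u_1' \in W$, we have $P_{W^\perp} v_1' = (\sigma_1')^{-1} P_{W^\perp} E^T u_1'$. Combined with $\|E\| = O(\sqrt n)$ almost surely (Lemma \ref{lemma:normE}), the crude bound $\sigma_1' \ge \sigma_1 - \|E\|$, and the hypothesis $\sigma_1 \ge n/\sqrt{r \log n}$, this gives
\begin{equation*}
\|P_{W^\perp} v_1'\| \le \|E\|/\sigma_1' = O(\sqrt{r \log n/n}) = o(1).
\end{equation*}
Writing $v_1' = \alpha v_1^* + \beta w^\perp$ with $v_1^* \in W$, $w^\perp \in W^\perp$ unit, this yields $|\beta| = O(\sqrt{r \log n/n})$.

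Second, I reduce to an $r$-dimensional eigenvector problem on $W$. Let $Q_W$ be the $n \times r$ isometry with columns $v_1,\ldots,v_r$, so that $D := Q_W^T A^T A Q_W = \diag(\sigma_1^2,\ldots,\sigma_r^2)$. Expanding $M := Q_W^T B^T B Q_W$, the cross term $Q_W^T A^T E Q_W = \diag(\sigma_1,\ldots,\sigma_r)\, U_r^T E Q_W$ has operator norm $O(\sigma_1 \sqrt{r \log n})$, via a standard $\eps$-net argument on $S^{r-1}$ applied to the bilinear form $(U_r x)^T E (Q_W y)$ (with $U_r = [u_1,\ldots,u_r]$); likewise for its transpose. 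The remaining term $(EQ_W)^T(EQ_W)$ has expectation $nI_r$ (since each row of $EQ_W$ has identity covariance) and fluctuation of operator norm $O(\sqrt{nr})$ by standard concentration of sample covariance matrices with sub-Gaussian rows. The scalar shift $nI_r$ does not affect eigenvectors, so Davis--Kahan applied to $D$ and $M - nI_r$ on $\R^r$ gives
\begin{equation*}
\sin \angle(v_1, \tilde v) = \sin \angle(Q_W^T v_1, \tilde x) \le \frac{O(\sigma_1 \sqrt{r \log n} + \sqrt{nr})}{\sigma_1^2 - \sigma_2^2} \le \frac{O(\sqrt{r \log n})}{\delta} + o(1) \le \epsilon,
\end{equation*}
provided $C(\epsilon)$ is sufficiently large, where $\tilde x$ is the top eigenvector of $M$ and $\tilde v := Q_W \tilde x \in W$.

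Third, I compare $v_1'$ with $\tilde v$. Using $v_1' = \alpha v_1^* + \beta w^\perp$ and $Bw^\perp = Ew^\perp$ (since $Aw^\perp = 0$), expanding $\|Bv_1'\|^2 = (\sigma_1')^2$ yields $\|B v_1^*\|^2 \ge (\sigma_1')^2 - O(\beta \sigma_1 \sqrt n + \beta^2 n) \ge \lambda_1(M) - O(\sigma_1 \sqrt{r \log n})$. The variational principle then gives $\sin^2 \angle(v_1^*, \tilde v) \le (\lambda_1(M) - \|Bv_1^*\|^2)/(\lambda_1(M) - \lambda_2(M)) \le O(\sqrt{r \log n}/\delta) \le \epsilon^2$. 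Finally, the triangle inequality for angles combines $\sin \angle(v_1, \tilde v) \le \epsilon$, $\sin \angle(\tilde v, v_1^*) \le \epsilon$, and $\sin \angle(v_1^*, v_1') = |\beta| = o(1)$ to yield $\sin \angle(v_1, v_1') \le 2\epsilon + o(1)$, which absorbs into the desired bound after inflating $C(\epsilon)$.

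The main technical obstacle is the bilinear norm bound $\|U_r^T E Q_W\| = O(\sqrt{r + \log n})$ for two fixed isometries into $\R^n$. This is a standard consequence of an $\eps$-net argument on $S^{r-1}$ together with the sub-Gaussian tail of $(U_r x)^T E (Q_W y)$, but the net scale must be chosen to balance the $5^r$ cardinality against the sub-Gaussian concentration so that the union bound yields the correct $\sqrt{r \log n}$ scaling uniformly in both arguments. A lesser but still nontrivial obstacle is to verify that the random-matrix quantities appearing in Steps 1--3 all concentrate on the same event of probability $1-o(1)$.
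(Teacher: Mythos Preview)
Your argument is correct, but it takes a different route from the paper. In the paper, Corollary~\ref{epsC3} is deduced in one line from Theorem~\ref{theorem:main1}: under the hypotheses $r\le n^{.99}$, $\sigma_1\ge n/\sqrt{r\log n}$, and $\delta\ge C\sqrt{r\log n}$, each of the three terms $\sqrt{r\log n}/\delta$, $n/(\delta\sigma_1)$, $\sqrt n/\sigma_1$ on the right of \eqref{upper3} is at most $\epsilon^2$ for suitable $C(\epsilon)$. The substance is therefore in the proof of Theorem~\ref{theorem:main1}, and that proof is organized differently from yours. The paper never introduces the intermediate eigenvector $\tilde v$ of the restricted quadratic form $M=Q_W^TB^TBQ_W$, nor does it invoke Davis--Kahan on the $r\times r$ problem. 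Instead it works directly with the decomposition $v_1'=c_1v_1+\dots+c_rv_r+c_0u$, uses the key Lemma~\ref{lemma:subspace} (an $\eps$-net bound on $\|(A+E)w\|^2$ for $w\in W$, controlling only the cross term $(Aw)\cdot(Ew)$ and absorbing $\|Ew\|^2\le 9n$ crudely), and compares against the lower bound $\|(A+E)v_1'\|\ge\sigma_1-\log\log n$ to squeeze out $1-c_1^2$ in one algebraic step.

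The trade-offs: your approach is more modular---Davis--Kahan is used as a black box on the reduced problem---but it costs you an extra comparison step (your Step~3) and an additional probabilistic input, namely the concentration $\|(EQ_W)^T(EQ_W)-nI_r\|=O(\sqrt{nr})$, which the paper never needs because it does not subtract off the mean of $\|Ew\|^2$. The paper's direct variational comparison is shorter and uses only Lemma~\ref{lemma:normE} and Lemma~\ref{lemma:product}, at the price of slightly more ad hoc inequalities. Both routes rest on the same essential idea: the randomness enters only through a bilinear form $u^TEv$ over an $r$-dimensional net, giving the $\sqrt{r\log n}$ scale in place of $\sqrt n$.
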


This result shows that (under the given circumstances)  we can approximate
 $v_1$ closely (by  $v_1'$)  provided   $\delta \ge C \sqrt {r \log n}$,  improving 
  the previous assumption $\delta  \ge C \sqrt n $.  Furthermore, the 
  appearance of $\sigma_1$ in the statement is necessary. If $\sigma_1 \ll \sqrt n$, then the noise 
dominates and we could not expect to recover any good information about $A$ from $A+E$.





Corollary  \ref{epsC3} is an easy  consequence of the following theorem.

\begin{theorem}  \label{theorem:main1} (Probabilistic $\sin$-theorem) 
For any positive constants $\alpha_1, \alpha_2$ there is a positive constant $C$ such that the following holds. 
Assume that $A$ has rank $r \le n^{1-\alpha_1}$ and  $\sigma_1:=\sigma_1(A)  \le  n^{\alpha_2} $. 
Let  $E$ be a random Bernoulli matrix.  Then with probabilty $1-o(1)$ 

\begin{equation} \label{upper3} \sin ^2 \angle(v_1, v'_1)  \le
 C \max \Big ( \frac{ \sqrt{r \log n} }{\delta },  \frac{n} {\delta \sigma_1}, \frac{\sqrt n}{\sigma_1}   \Big ) . \end{equation} 
 
 Furthermore, one can remove the term $\frac{\sqrt n}{\sigma_1} $ if $\delta \le \frac{1}{2} \sigma_1$.
\end{theorem}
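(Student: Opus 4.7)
The plan is to feed refined spectral information about $E$ into the usual eigenvector equation for $(A+E)^*(A+E)$; the key quantity that replaces the naive bound $\|E\|=O(\sqrt n)$ is the compression $P_CEP_R$ of $E$ to the column and row spaces $C,R$ of $A$ (each of dimension $r$), which satisfies $\|P_CEP_R\|=O(\sqrt{r\log n})$ almost surely by a standard $\varepsilon$-net/concentration argument on a product of two $r$-dimensional unit spheres. Since $P_C(A+E)P_R=A+P_CEP_R$, this already delivers the refined spectral estimate $\sigma_1'\ge\sigma_1-O(\sqrt{r\log n})$ that will keep a later denominator under control.

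I would expand $v_1'=\sum_{i=1}^n\alpha_i v_i$ in the singular basis of $A$, so that $\sin^2\angle(v_1,v_1')=\sum_{i\ne 1}\alpha_i^2$. Combining $(A+E)v_1'=\sigma_1' u_1'$ with $(A+E)^*u_1'=\sigma_1'v_1'$ and eliminating $\langle u_i,u_1'\rangle$ between them yields the central identity
\[
\bigl((\sigma_1')^2-\sigma_i^2\bigr)\,\alpha_i \;=\; \sigma_i\,\langle u_i,Ev_1'\rangle + \sigma_1'\,\langle Ev_i,u_1'\rangle,\qquad i\ne 1,
\]
which I would use to estimate the row-orthogonal part ($i>r$) and the in-row part ($2\le i\le r$) separately.

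For $i>r$ the identity collapses (since $\sigma_i=0$) to $\alpha_i=\langle Ev_i,u_1'\rangle/\sigma_1'$, and Parseval gives $\sum_{i>r}\alpha_i^2=\|P_{R^\perp}E^*u_1'\|^2/(\sigma_1')^2\le\|E\|^2/(\sigma_1')^2=O(n/\sigma_1^2)$, which is bounded by $\sqrt n/\sigma_1$ under the size hypothesis $\sigma_1\ge n/\sqrt{r\log n}$ and by $n/(\delta\sigma_1)$ when $\delta\le\sigma_1/2$. Symmetrically $\|P_{C^\perp}u_1'\|=O(\sqrt n/\sigma_1)$. For the in-row part I would control the numerator sums jointly via
\[
\sum_{i=2}^r\langle u_i,Ev_1'\rangle^2 \;\le\; \|P_CEv_1'\|^2 \;\le\; 2\|P_CEP_R\|^2 + 2\|E\|^2\,\|P_{R^\perp}v_1'\|^2,
\]
and similarly for $\sum_{i=2}^r\langle Ev_i,u_1'\rangle^2$; the first summand is $O(r\log n)$ by the compressed-norm bound and the second is $O(n)\cdot O(n/\sigma_1^2)=O(n^2/\sigma_1^2)$ after substituting the $R^\perp$-estimate. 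Combined with the denominator bound $(\sigma_1')^2-\sigma_i^2\ge c\,\delta\sigma_1$ (immediate from $\sigma_1'\ge\sigma_1-O(\sqrt{r\log n})$ and $\sigma_1-\sigma_i\ge\delta$, provided $C$ is large enough), this yields $\sum_{i=2}^r\alpha_i^2=O(r\log n/\delta^2 + n^2/(\delta^2\sigma_1^2))$, which under the two hypotheses simplifies to $O(\sqrt{r\log n}/\delta + n/(\delta\sigma_1))$, matching the first two terms of the claim.

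The step that will take the most care is the joint verification of all the probabilistic ingredients: the compressed-norm bound $\|P_CEP_R\|=O(\sqrt{r\log n})$ is uniform in $v_1'$, but the second numerator piece requires the estimate $\|P_{R^\perp}v_1'\|=O(\sqrt n/\sigma_1)$, and that in turn depends on $u_1'$, hence on $E$. Chasing these dependencies (and the symmetric ones on the left) while ensuring that $\|P_CEP_R\|$, $\|E\|$, $\sigma_1'\ge\sigma_1-O(\sqrt{r\log n})$, $\|P_{R^\perp}v_1'\|$ and $\|P_{C^\perp}u_1'\|$ all satisfy their stated bounds simultaneously with probability $1-o(1)$ is the main technical work, and is where the rank-$r$ hypothesis really earns its keep.
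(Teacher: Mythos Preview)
Your approach is correct and leads to the same bound, but it is organised quite differently from the paper's proof. The paper argues variationally: it first records the lower bound $\sigma_1'\ge\sigma_1-\log\log n$ (from Lemma~\ref{lemma:product} applied to the single pair $(u_1,v_1)$), and then proves a \emph{uniform} upper bound on $\|(A+E)v\|^2$ for $v$ in the row space of $A$ (Lemma~\ref{lemma:subspace}, via an $\varepsilon$-net on the $r$-dimensional sphere). Comparing the two forces $1-c_1^2$ to be small, and a separate crude estimate gives $c_0^2=O(\sqrt n/\sigma_1)$; the two cases $\sigma_2\lessgtr\tfrac12\sigma_1$ then yield \eqref{upper3}. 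Your route is instead the ``Davis--Kahan style'' one: you write down the componentwise identity $((\sigma_1')^2-\sigma_i^2)\alpha_i=\sigma_i\langle u_i,Ev_1'\rangle+\sigma_1'\langle Ev_i,u_1'\rangle$ and estimate the two ranges $i>r$ and $2\le i\le r$ separately, replacing the paper's key Lemma~\ref{lemma:subspace} by the compressed-norm bound $\|P_CEP_R\|=O(\sqrt{r\log n})$. The underlying probabilistic engine is the same (an $\varepsilon$-net of size $\exp(O(r\log n))$ combined with Lemma~\ref{lemma:product}); the difference is packaging. Your version is perhaps more modular and closer to how one usually proves Wedin-type inequalities, while the paper's version makes the geometric intuition (``vectors far from $v_1$ cannot achieve $\|(A+E)v\|\approx\sigma_1$'') more explicit and feeds directly into the recursive argument for Theorem~\ref{theorem:main2}.

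One remark on your last paragraph: the ``chasing dependencies'' you anticipate is not actually an issue. The only genuinely probabilistic inputs you need are the two events $\{\|E\|\le 3\sqrt n\}$ and $\{\|P_CEP_R\|\le C\sqrt{r\log n}\}$; both depend only on $E$ (and on the fixed subspaces $C,R$, which are determined by $A$), and each holds with probability $1-o(1)$. On the intersection of these two events, the lower bound on $\sigma_1'$, the estimates $\|P_{R^\perp}v_1'\|^2,\|P_{C^\perp}u_1'\|^2=O(n/\sigma_1^2)$, and the final bound on $\sum_{i\ge2}\alpha_i^2$ are all \emph{deterministic} consequences of the singular-vector identities you wrote down. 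So there is no circularity, and the step you flagged as the main technical work is in fact immediate. (You should also note, when cleaning up, that the reductions $r\log n/\delta^2\le\sqrt{r\log n}/\delta$ and $n^2/(\delta^2\sigma_1^2)\le n/(\delta\sigma_1)$ are justified because one may assume each term in the maximum is at most $1$, else \eqref{upper3} is vacuous; the hypothesis ``$\sigma_1\ge n/\sqrt{r\log n}$'' you invoke is not part of the theorem and is not needed.)
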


Let us know consider the general case when we try to approximate the first $k$ singular vectors. 
Set $\eps_k:= \sin  \angle (v_k, v_k') $ and 
$s_k: = (\eps_1^2 + \dots + \eps_k^2)^{1/2}$. We can bound $\eps_k$ recursively as follows.

\begin{theorem} \label{theorem:main2} For any positive constants $\alpha_1, \alpha_2, k$ there is a positive constant $C$ such that the following holds. 
Assume that $A$ has rank $r \le n^{1-\alpha_1}$ and  $\sigma_1:=\sigma_1(A)  \le  n^{\alpha_2} $. 
Let  $E$ be a random Bernoulli matrix.  Then with probabilty $1-o(1)$

\begin{equation} \label{upper4} \eps_k^2  \le C  \max \Big( \frac{\sqrt {r \log n}} {\delta_k}, \frac{n}{\sigma_k \delta_k}, 
\frac{ \sqrt n}{\sigma_k} , \frac{\sigma_1^2 s_{k-1}^2}{\sigma_k \delta_k } , 
\frac{(\sigma_1+\sqrt n)(\sigma_k +\sqrt n)s_{k-1} }{\sigma_k \delta_k}  \Big). \end{equation} 

\end{theorem}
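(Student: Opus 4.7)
The plan is to reduce to Theorem~\ref{theorem:main1} by deflation, proceeding by induction on $k$. The base case $k=1$ is exactly Theorem~\ref{theorem:main1}. For $k\ge 2$, set
\[
A_{\ge k} := A - \sum_{i<k}\sigma_i u_i v_i^*, \qquad B := (A+E) - \sum_{i<k}\sigma_i' u_i' (v_i')^*,
\]
so that $v_k$ is the top right singular vector of $A_{\ge k}$ (with top singular value $\sigma_k$ and gap $\delta_k$), while $v_k'$ is the top right singular vector of $B$. Decomposing
\[
B = A_{\ge k} + E + \Delta,\qquad \Delta := \sum_{i<k}\bigl[\sigma_i u_i v_i^* - \sigma_i' u_i' (v_i')^*\bigr],
\]
the effective perturbation of $A_{\ge k}$ splits into the random Bernoulli matrix $E$ and a deterministic low-rank correction $\Delta$. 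The inductive hypothesis on $u_i',v_i'$, together with Weyl's inequality and Lemma~\ref{lemma:normE}, yields the estimates
\[
\|\Delta\|=O(\sigma_1 s_{k-1}+\sqrt n),\qquad \rank(\Delta)\le 2(k-1)=O(1).
\]

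Replaying the geometric argument behind Theorem~\ref{theorem:main1} on $A_{\ge k}$ (whose rank is at most $r$, top singular value is $\sigma_k$, and gap is $\delta_k$) with perturbation $E+\Delta$: the terms that involve only $A_{\ge k}$ and $E$ reproduce the first three quantities on the right-hand side of \eqref{upper4}, with $\sigma_k,\delta_k$ now in the roles of $\sigma_1,\delta_1$. The remaining contributions from $\Delta$, obtained by expanding the identity $B^*Bv_k'=(\sigma_k')^2 v_k'$ and carefully tracking cross-terms, yield the last two quantities of \eqref{upper4}: the quadratic piece $\Delta^*\Delta$ produces $\sigma_1^2 s_{k-1}^2/(\sigma_k\delta_k)$, while the linear cross-terms between $\Delta$ and $A_{\ge k}+E$ produce the mixed $(\sigma_1+\sqrt n)(\sigma_k+\sqrt n)s_{k-1}/(\sigma_k\delta_k)$ term. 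The product form $(\sigma_1+\sqrt n)(\sigma_k+\sqrt n)$ reflects the fact that $\Delta$ lives on the $A$-scale (through the original $u_i,v_i$'s, so that $\sigma_1+\sqrt n\sim\|A+E\|$) and gets multiplied against the deflated operator (of norm $\sim\sigma_k+\sqrt n$).

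The main obstacle is the \emph{decoupling} of $\Delta$ from $E$: \emph{a priori} $\Delta$ depends on $E$ through $\sigma_i',u_i',v_i'$, which could spoil the independence exploited inside Theorem~\ref{theorem:main1}. The remedy is that $\Delta$ has constant rank and that the inductive hypothesis holds on a single event of probability $1-o(1)$; conditioning on that event we treat $\Delta$ as an adversarial deterministic perturbation and use only operator-norm estimates on its contribution, extracting no further probabilistic cancellation from it. A symmetric inductive statement for the left singular vectors $u_i'$, obtained by applying the entire argument to $A^*$ as well, is needed both to bound $\|\Delta\|$ and to close the induction.
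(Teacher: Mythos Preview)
Your deflation approach is genuinely different from the paper's, which does \emph{not} deflate at all. The paper works directly with $A+E$ and its $k$th singular vector $v_k'$, expanding $v_k' = c_1 v_1 + \dots + c_r v_r + c_0 u$ in the singular basis of $A$ exactly as in the $k=1$ case. The new observation is elementary: since $v_k' \perp v_j'$ for each $j < k$, one has
\[
|c_j| \;=\; |v_k' \cdot v_j| \;=\; |v_k' \cdot (v_j - v_j')| \;\le\; \|v_j - v_j'\| \;\le\; 2\eps_j .
\]
Splitting $w = \bar w_k + w_k$ with $\bar w_k := \sum_{j<k} c_j v_j$ and $w_k := \sum_{j\ge k} c_j v_j$, the paper applies Lemma~\ref{lemma:subspace} to $w_k$ exactly as in the proof of Theorem~\ref{theorem:main1}, and bounds $\|(A+E)\bar w_k\|$ crudely by $O((\sigma_1+\sqrt n)\,s_{k-1})$. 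The two extra terms in \eqref{upper4} then fall out as $\|(A+E)\bar w_k\|^2$ and the cross term $2\|(A+E)w_k\|\,\|(A+E)\bar w_k\|$, each divided by $\sigma_k\delta_k$ after the same algebra as before.

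This is simpler than your route in two respects. First, there is no decoupling issue to manage: no $\Delta$ ever appears, so one never conditions on an $E$-dependent event before invoking Lemma~\ref{lemma:subspace}; all the probabilistic lemmas are applied once, on fixed subspaces, and the rest is deterministic. Second, only the right singular vectors $v_j,v_j'$ enter, so no parallel induction for the left vectors $u_j'$ is required. Your approach can be made rigorous along the lines you sketch (treat $\Delta$ via operator-norm bounds on a single high-probability event), but it carries the extra overhead of the simultaneous left/right induction and the bookkeeping for $\|\Delta\|$; the paper's direct splitting via the orthogonality $v_k'\perp v_j'$ is the shortcut that removes all of that.
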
 

The first three terms in the RHS of \eqref{upper4} mirror those in \eqref{upper3}. The last two terms represent the 
recursive effect.

To give the reader a feeling about this bound, let us consider the following example. Take $A$ such that $r  =n^{o(1)}$, $\sigma_1 =2 n^{\alpha}, \sigma_2 = 
n^{\alpha}, \delta_2 = n^{\beta}$,  where $\alpha > 1/2 > \beta > 1 -\alpha $ are positive constants.
Then $\delta_1 = n^{\alpha}$ and 
$\epsilon_1^2 \le  \max \Big( n^{-\alpha +o(1)} , n^{1 -2\alpha +o(1)}  \Big)$, almost surely. 

Assume that we  want to bound $\sin \angle (v_2, v_2')$. 
The gap $\delta_2  =n^{\beta} = o(n^{1/2})$, so Wedin theorem (in the general form) 
does not apply. On the other hand, 
Theorem \ref{theorem:main2} implies that almost surely 

$$\eps_2^2 \le \max \Big( n^{-\beta +o(1)}, n^{1/2 -\alpha +o(1)} , n^{-\alpha-\beta +1} \Big). $$

Thus, we have almost surely

$$\sin \angle (v_2, v_2')  = n^{-\Omega (1)} = o(1). $$

{\it The angle between two subspaces.} Let us mention that if $\sin \angle (v_j , v_j')  \le \eps$ for all 
$1 \le j \le k$, then $\sin \angle (V_k , V_k') \le \eps$, where $V_k$ ($V_k'$) is the subspace spanned by 
$v_1, \dots, v_k$ ($v_1', \dots, v_k '$, respectively).  The formal (and a bit technical) definition of $\angle (V_k, V_k')$ can be seen in \cite{GLbook, Bbook}. It is important to know that 
for two subspaces $V, V'$ of the same dimension

$$\sin \angle (V , V') = \| P - P' \| $$ where $P$ ($P'$) denotes the orthogonal projection onto 
$V$  ($V'$). Moreover $\|P-P'\|$  is frequently used as the distance between $V$ and $V'$.

\vskip2mm

The rest of the paper is organized as follows. In the next section, we present tools from linear algebra and probability.  The proofs of Theorems \ref{theorem:main1} 
and  \ref{theorem:main2} follow in Sections \ref{section:Proof1} and \ref{section:Proof2}, respectively. 
In Section \ref{section:extension}, we extend these theorems for other models of random noise, including 
the gaussian one, and also to matrices $A$ which do  not necessarily have low rank.

\section{Preliminaries: Linear Algebra and Probability}  \label{section:lemma}

\subsection{Linear Algebra} Fix a system  $v_1, \dots, v_n$ of unit singular vectors of $A$. 
It is well-known that  $v_1, \dots, v_n$ form an orthonormal basis.  (If $A$ has rank $r$, the choice of $v_{r+1}, \dots, v_n$ will turn out to be irrelevant.)

For a vector $v$, if we  decompose  it as 

$$v: =\alpha_1 v_1 + \dots +  \alpha _n v_n ,$$ then 

\begin{equation}
\label{eqn:basic}  \| Av\| ^2 = v \cdot A^{\ast} A v = \sum_{i=1}^n \alpha_i^2 \sigma_i^2  . \end{equation} 

We will use the 
Courant-Fisher minimax principle for singular values, which asserts that

\begin{equation} \label{minimax} \sigma_k  (M) = \max_{\dim H=k} \min _{ v\in H, \|v\| =1} \| Mv\| , \end{equation} where 
$\sigma_k(M)$ is the $k$th largest singular value of $M$.
















\subsection{$\epsilon$-net lemma} 

Let $\epsilon$ be a positive number. A set $X$ is an $\epsilon$-net of a set $Y$ if for any $y \in Y$, there is $x \in X$ such that 
$\| x-y\| \le \epsilon$. 

\begin{lemma} \label{lemma:net} 
Let  $H$ be a subspace and  $S:= \{ v | \|v\| =1 , v \in H \}$. 
Let $0 <\eps  \le 1$ be a number and $M$ a linear map. Let $\CN \subset S$ 
be an ${\epsilon} $-net  $\CN$ of $S$. Then there is a vector $w \in \CN$ such that 

$$\| Mw\| \ge (1-\epsilon)  \max_{\|v\| \in  S } \| M v\| . $$

\end{lemma}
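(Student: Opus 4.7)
The plan is to use a direct triangle-inequality argument, which is the standard proof of this kind of net statement. First I would let $v^{\ast} \in S$ be a unit vector that attains $M^{\ast} := \max_{v \in S}\|Mv\|$; since $S$ is the unit sphere of a finite-dimensional subspace (hence compact) and $v \mapsto \|Mv\|$ is continuous, the maximum is achieved. Using that $\CN$ is an $\epsilon$-net of $S$, I would then pick $w \in \CN$ with $\|v^{\ast} - w\| \le \epsilon$.

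Next I would estimate $\|Mw\|$ via the triangle inequality
\[
\|Mw\| \;\ge\; \|Mv^{\ast}\| - \|M(v^{\ast} - w)\| \;=\; M^{\ast} - \|M(v^{\ast} - w)\|.
\]
The key observation is that $v^{\ast} - w$ still lies in the subspace $H$, so if $v^{\ast} \ne w$ we may write $v^{\ast} - w = \|v^{\ast} - w\|\,u$ for some unit vector $u \in S$. By definition of $M^{\ast}$,
\[
\|M(v^{\ast} - w)\| = \|v^{\ast} - w\| \cdot \|Mu\| \;\le\; \epsilon \cdot M^{\ast}.
\]
Plugging this back in yields $\|Mw\| \ge (1 - \epsilon) M^{\ast}$, which is the desired bound.

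There is no real obstacle here; the only mild subtlety is the need for $v^{\ast} - w$ to lie in the same subspace $H$ so that scaling it back to unit length produces a competitor in the supremum defining $M^{\ast}$. This is automatic because both $v^{\ast}$ and $w$ are chosen inside $H$. The argument uses only the triangle inequality, homogeneity of $M$, and compactness of $S$, and requires no probabilistic input.
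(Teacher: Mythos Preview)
Your proof is correct and follows essentially the same approach as the paper: pick a maximizer $v^\ast$, approximate it by a net point $w$, and use the triangle inequality together with $\|M(v^\ast - w)\| \le \epsilon\, M^\ast$. Your version is slightly more detailed in justifying the existence of the maximizer and the normalization of $v^\ast - w$, but the argument is identical.
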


\begin{proof}
Let $v$ be the vector where the maximum is attained and let $w$ be a vector in the net closest to $v$ (tights are broken arbitrarily). 
Then by the triangle inequality 

$$\| Mw \|  \ge \| Mv\| - \| M (v-w) \| . $$

As $\| v-w\| \le \epsilon$, $\| M(v-w) \| \le \epsilon \max_{\|v\|  \in S} \| M v\| $, concluding the proof. 
\end{proof}

The following estimate for the minimum size of an $\epsilon$ of a sphere is well-known.
\begin{lemma} \label{lemma:sizeofnet} 
A unit sphere in $d$ dimension admits an $\epsilon$-net of size at most $(3\epsilon^{-1})^d$. 
\end{lemma}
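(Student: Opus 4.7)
The plan is to use the standard volume/packing argument. The idea is to construct an $\epsilon$-net as a maximal $\epsilon$-separated subset of the unit sphere, and then bound its size by comparing volumes of small disjoint balls against one large containing ball.

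More concretely, I would proceed as follows. First, let $\CN \subset S^{d-1}$ be a maximal subset with the property that any two distinct points of $\CN$ are at distance strictly greater than $\epsilon$. Such a set exists by Zorn's lemma (or simply by greedy construction, since the sphere is compact and the process must terminate). By maximality, for every $y \in S^{d-1}$ there is some $x \in \CN$ with $\|x - y\| \le \epsilon$, so $\CN$ is indeed an $\epsilon$-net.

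Next, I would bound $|\CN|$ via a packing argument. The open Euclidean balls $B(x, \epsilon/2)$ for $x \in \CN$ are pairwise disjoint, since any two centers are more than $\epsilon$ apart. On the other hand, every such ball is contained in the ball $B(0, 1 + \epsilon/2)$ of radius $1 + \epsilon/2$ centered at the origin, because each $x$ lies on the unit sphere. Comparing $d$-dimensional volumes yields
\begin{equation*}
|\CN| \cdot (\epsilon/2)^d \le (1 + \epsilon/2)^d,
\end{equation*}
which rearranges to $|\CN| \le (1 + 2/\epsilon)^d$. Since $\epsilon \le 1$, we have $1 + 2/\epsilon \le 3/\epsilon$, giving the claimed bound $(3\epsilon^{-1})^d$.

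There is no real obstacle here; the only mildly delicate point is to make sure the comparison ball has the right radius ($1 + \epsilon/2$ rather than something larger), which is what lets the $1 + 2/\epsilon$ be absorbed into $3/\epsilon$ under the assumption $\epsilon \le 1$. Everything else is standard.
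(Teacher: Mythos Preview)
Your proposal is correct and is essentially identical to the paper's own proof: both take a maximal $\epsilon$-separated subset of the sphere, pack disjoint balls of radius $\epsilon/2$ into the ball of radius $1+\epsilon/2$, and compare volumes to get $(1+2/\epsilon)^d \le (3/\epsilon)^d$ under $\epsilon \le 1$. The only cosmetic difference is that you make the use of $\epsilon \le 1$ explicit, whereas the paper leaves it implicit from the surrounding context.
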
 

\begin{proof} Let $S$ be the sphere in question, centered at $O$, and $\CN \subset S$ be a finite subset of $S$ such that the distance between any two points is at least $\epsilon$. If  $\CN$ is maximal with respect to this property then $\CN$ is an $\epsilon$-net. On the other hand, the balls of radius $\epsilon/2$ centered at the points in $\CN$ are disjoint subsets of 
the the ball of radius $(1+\eps/2)$, centered at $O$. Since 

$$\frac{1+\eps/2}{\eps/2}  \le 3 \eps^{-1} $$ the claim follows by a volume argument.  \end{proof}

\subsection{Probability}

We need the following estimate on $\|E \|$  (see \cite{AKV, Meckes}). 

\begin{lemma} \label{lemma:normE} There is a constant $C_0 >0$ such that the following holds. Let $E$ be a random Bernoulli matrix of size $n$. Then

$$\P(\|E \| \le 3 \sqrt n) \le \exp(- C_0 n). $$
\end{lemma}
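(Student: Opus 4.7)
The plan is a standard $\epsilon$-net discretization combined with Hoeffding's inequality. (The displayed inequality is clearly meant to be $\Pr(\|E\| \ge 3\sqrt n) \le \exp(-C_0 n)$, i.e.\ an upper tail bound, since the Bernoulli matrix has spectral norm of order $\sqrt n$ and this is how the estimate is invoked in the sequel.) First I would use the variational characterization
\[
\|E\| = \max_{\|u\|=\|v\|=1} u^{T} E v
\]
and apply Lemma \ref{lemma:net} to each argument. For a sufficiently small fixed $\epsilon \in (0,1)$ and an $\epsilon$-net $\CN$ of the unit sphere in $\R^n$, this yields $\|E\| \le (1-\epsilon)^{-2} \max_{u,v\in\CN} u^{T} E v$, and by Lemma \ref{lemma:sizeofnet} one may take $|\CN| \le (3/\epsilon)^n$.

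Second, for any fixed pair of unit vectors $u,v$ the bilinear form $u^{T} E v = \sum_{i,j} u_i v_j E_{ij}$ is a sum of independent $\pm 1$ Bernoullis weighted by coefficients $(u_iv_j)$ whose squares sum to $\|u\|^2\|v\|^2 = 1$. Hoeffding's inequality therefore supplies the subgaussian tail $\Pr(|u^T E v| \ge t) \le 2 e^{-t^2/2}$. Third, a union bound over the $|\CN|^2 \le (3/\epsilon)^{2n}$ pairs gives
\[
\Pr\bigl( \|E\| \ge (1-\epsilon)^{-2} t \bigr) \le 2 (3/\epsilon)^{2n} e^{-t^2/2},
\]
so taking $t$ a suitable multiple of $\sqrt n$ and choosing $\epsilon$ small enough to force the net entropy $2n\log(3/\epsilon)$ below the subgaussian exponent $t^2/2$ produces the required $\exp(-C_0 n)$ bound.

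The main (and really the only) subtlety I expect is quantitative: the bare net + Hoeffding estimate is tight up to an absolute multiplicative constant, so matching the specific numerical constant $3$ on the right-hand side rather than some larger absolute constant requires a small sharpening. Two standard options are (a) a single-net reduction $\|E\| \le (1-\epsilon)^{-1} \max_{v \in \CN} \|Ev\|$, after which $\|Ev\|^2$ is a sum of $n$ independent sub-exponential variables concentrating around its mean $n$ and the net entropy is halved to $n\log(3/\epsilon)$, leaving ample room for the constant $3$; or (b) the moment / trace method used in \cite{AKV, Meckes}. Either route is routine, so no conceptual obstacle arises beyond the basic discretization scheme outlined above.
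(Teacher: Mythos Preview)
The paper does not actually prove Lemma~\ref{lemma:normE}; it simply cites \cite{AKV, Meckes} for the estimate and moves on. So there is no ``paper's own proof'' to compare against, and your proposal supplies strictly more than the paper does.

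Your $\epsilon$-net plus subgaussian-tail argument is the standard route and is conceptually sound, including your correction of the obvious typo in the displayed inequality. You are also right that the only delicate point is the specific constant $3$: the naive double-net bound $\|E\|\le (1-\epsilon)^{-2}\max_{u,v\in\CN}u^TEv$ combined with Hoeffding and a union bound over $(3/\epsilon)^{2n}$ pairs yields $\P(\|E\|\ge C\sqrt n)\le e^{-c n}$ only for some absolute $C$ somewhat larger than $3$. Your option~(a) narrows the gap but, if you check the numerics, the sub-exponential tail of $\|Ev\|^2$ against a net of size $(3/\epsilon)^n$ still does not quite reach $3$ without an additional idea (e.g.\ a decoupling/symmetrization step or a sharper MGF bound for squared Rademacher sums). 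Option~(b), the trace/moment method of \cite{AKV, Meckes}, is indeed what those references use and is the clean way to get a small explicit constant. Since the paper only needs $\|E\|=O(\sqrt n)$ with probability $1-e^{-\Omega(n)}$ (the value $3$ is never used in a sharp way downstream), your argument as written already suffices for every application in the paper.
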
 

Next, we present a lemma which roughly asserts  that for any two vectors 
given $u$ and $v$, $u$ and $Ev$ are, with high probability, almost orthogonal. 
We present the proof of this lemma in \ref{section:Proofproduct}.

\begin{lemma}  \label{lemma:product}  
 Let $E$ be a random Bernoulli matrix of size $n$. 
For any fixed unit vectors $u,v$ and positive number $t$

$$\P (| u^T E  v | \ge t )  \le 2 \exp (-t^2/16) .$$
\end{lemma}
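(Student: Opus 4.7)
The plan is to expand $u^T E v$ as a weighted sum of the independent Bernoulli entries of $E$ and then apply a standard Chernoff/Hoeffding moment-generating-function argument.

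First, I would write
$$X := u^T E v = \sum_{i,j=1}^n u_i v_j E_{ij},$$
which expresses $X$ as a sum of $n^2$ independent random variables $X_{ij} := u_i v_j E_{ij}$, each taking the values $\pm u_i v_j$ with equal probability. The relevant scale parameter is
$$\sum_{i,j} u_i^2 v_j^2 = \Big(\sum_i u_i^2\Big)\Big(\sum_j v_j^2\Big) = \|u\|^2 \|v\|^2 = 1,$$
so $X$ should behave like a standard subgaussian variable.

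Second, I would use the classical inequality $\cosh(\lambda) \le \exp(\lambda^2/2)$ for all real $\lambda$, which gives the per-entry MGF bound $\E[\exp(\lambda X_{ij})] = \cosh(\lambda u_i v_j) \le \exp(\lambda^2 u_i^2 v_j^2 / 2)$. By independence,
$$\E[\exp(\lambda X)] = \prod_{i,j} \E[\exp(\lambda X_{ij})] \le \exp\Big(\tfrac{\lambda^2}{2} \sum_{i,j} u_i^2 v_j^2\Big) = \exp(\lambda^2/2).$$
Markov's inequality then yields $\P(X \ge t) \le \exp(-\lambda t + \lambda^2/2)$, and optimizing at $\lambda = t$ gives $\P(X \ge t) \le \exp(-t^2/2)$. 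Applying the same argument to $-X$ and taking a union bound yields
$$\P(|X| \ge t) \le 2\exp(-t^2/2) \le 2\exp(-t^2/16),$$
which is the stated bound (the looser constant $1/16$ gives some slack that is presumably convenient later).

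There is no real obstacle here: this is a textbook Hoeffding-type calculation, and the only substantive observation is the identity $\sum u_i^2 v_j^2 = \|u\|^2 \|v\|^2 = 1$, which makes the quadratic form in the Rademacher variables isotropic regardless of how $u$ and $v$ are chosen. An alternative route via Hoeffding's inequality (with $a_{ij} = -|u_i v_j|$, $b_{ij} = |u_i v_j|$) gives the same bound, and would be equally acceptable.
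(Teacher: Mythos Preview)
Your proof is correct. Both you and the paper begin by writing $u^{T}Ev$ as a weighted Rademacher sum $\sum_{i,j} u_i v_j E_{ij}$ with $\sum_{i,j} (u_i v_j)^2 = 1$, and then prove a subgaussian tail bound for such sums via the moment-generating function. The difference is in the MGF estimate: you use the global inequality $\cosh(\lambda) \le \exp(\lambda^2/2)$, which immediately gives $\P(|X|\ge t)\le 2\exp(-t^2/2)$ and hence the stated bound with room to spare. The paper instead uses the cruder per-term bound $\E\exp(z c_i \xi_i) \le 1 + (z c_i)^2$, which is only valid when $|z c_i|\le 1$; this forces a preliminary truncation step (separating out the indices with $|c_i|\ge 2/t$, which contribute at most $t/2$ deterministically by Cauchy--Schwarz) and is what produces the constant $1/16$. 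Your route is shorter and yields a sharper constant in the Bernoulli case; the paper's truncation argument, on the other hand, adapts more transparently to general mean-zero, variance-one, bounded entries (as noted in the paper's subsequent remark), since there one no longer has the clean $\cosh$ identity and a small-$\lambda$ second-moment expansion is the natural substitute.
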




Now we are ready to state our key lemma. 

\begin{lemma} \label{lemma:subspace}
For any constants $0 < \beta_1, 0<  \beta_2 < 1$ there is a constant $C$ such that the following holds.  Assume that
$A$ is such that  $\sigma_1 \le n^{\beta_1}$ and let  $V :=\Span \{v_1, \dots, v_d\}$
 for some $d \ge n^{1-\beta_2}$. 
Then the following holds almost surely. For any unit vector $v \in V$

$$ \| (A+E) v\| ^2 \le \sum_{i=1}^n (v \cdot v_i)^2 \sigma_i^2 + C(n +\sigma_1 \sqrt {d \log n} ) . $$
\end{lemma}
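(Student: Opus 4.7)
The plan is to decompose
$$\|(A+E)v\|^2 = \|Av\|^2 + 2(Av)^T E v + \|Ev\|^2$$
and handle the three pieces separately. By equation \eqref{eqn:basic}, the first term equals $\sum_{i=1}^n (v\cdot v_i)^2 \sigma_i^2$, which is exactly the main term in the desired bound. The third term is at most $\|E\|^2$, and Lemma \ref{lemma:normE} gives $\|E\|^2 \le 9n$ almost surely, contributing the $O(n)$ term. So the real work is to show that the cross term satisfies $|(Av)^T E v| \le C' \sigma_1 \sqrt{d \log n}$ uniformly over all unit $v \in V$, almost surely.

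The key structural reduction exploits the range of $A$ on $V$: since $A v_i = \sigma_i u_i$, where $u_i$ denotes the left singular vector paired with $v_i$, the image $AV$ is contained in $U := \Span\{u_1,\dots,u_d\}$. Hence, writing $P_U$ and $P_V$ for the orthogonal projections onto $U$ and $V$, for unit $v \in V$ one has
$$(Av)^T E v = (Av)^T P_U E P_V v,\qquad |(Av)^T E v| \le \|Av\|\cdot \|P_U E P_V\| \le \sigma_1 \|P_U E P_V\|.$$
The problem now reduces to the matrix concentration estimate $\|P_U E P_V\| = O(\sqrt{d\log n})$ almost surely.

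For this last bound I would apply an $\epsilon$-net argument with $\epsilon = 1/4$. Let $\CN_U, \CN_V$ be $\epsilon$-nets of the unit spheres in $U$ and $V$; by Lemma \ref{lemma:sizeofnet} these have size at most $12^d$. For a fixed pair $(u', v') \in \CN_U \times \CN_V$, Lemma \ref{lemma:product} with threshold $t = K\sqrt{d\log n}$ gives tail probability $2\exp(-K^2 d \log n / 16)$. Taking a union bound over the $12^{2d}$ pairs and choosing $K$ sufficiently large (compared to $\log 12$) makes the total failure probability $o(1)$; on the complementary event every net pair satisfies $|u'^T E v'| \le K\sqrt{d\log n}$. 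Approximating arbitrary unit $u\in U,\, v\in V$ by nearest net points and using that $(u-u')/\|u-u'\|$ and $(v-v')/\|v-v'\|$ are unit vectors in $U$ and $V$ respectively, one obtains
$$|u^T E v| \le |u'^T E v'| + \|u-u'\|\cdot \|P_U E P_V\| + \|v-v'\|\cdot \|P_U E P_V\|,$$
and taking the supremum over $u,v$ yields the self-bounding inequality $\|P_U E P_V\| \le K \sqrt{d\log n} + \tfrac12 \|P_U E P_V\|$, i.e.\ $\|P_U E P_V\| \le 2K\sqrt{d\log n}$.

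Assembling everything produces $\|(A+E) v\|^2 \le \sum_i (v\cdot v_i)^2 \sigma_i^2 + 9n + 4K \sigma_1\sqrt{d\log n}$, which is the claim with $C = \max(9, 4K)$. The main subtlety is the self-bounding step: one must choose $\epsilon$ strictly smaller than $1/2$ so that the two net-approximation error terms together contribute a factor strictly less than $1$ of $\|P_U E P_V\|$, allowing them to be absorbed on the left and the inequality to close. This is also the reason the bound comes out as $\sqrt{d \log n}$ rather than the (slightly smaller) $\sqrt d$ one might hope for: the $\log n$ in the threshold is what buys enough room in the union bound after paying the $12^{2d}$ covering cost.
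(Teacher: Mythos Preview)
Your proof is correct. The decomposition and the treatment of $\|Av\|^2$ and $\|Ev\|^2$ match the paper, but your handling of the cross term $(Av)^T Ev$ is genuinely different. The paper works on $V$ alone: it lays down a single very fine net with $\epsilon=1/(n+\sigma_1)$, applies Lemma~\ref{lemma:product} to the pair $(Av/\|Av\|,\,v)$ at each net point, and union-bounds; the extreme fineness of the net makes the passage from net points to arbitrary $v$ automatic, and it is precisely the resulting net cardinality $(3/\epsilon)^d=\exp(O(d\log n))$ that forces the threshold $\sqrt{d\log n}$. You instead bring in the left-singular subspace $U$, reduce to the compressed operator norm $\|P_U E P_V\|$, and control it with coarse ($\epsilon=1/4$) nets on both spheres together with the standard self-bounding trick. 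This isolates a cleaner random-matrix quantity and actually buys a little extra: your covering number is only $\exp(O(d))$, so whenever $d\to\infty$ a threshold of order $\sqrt d$ (no $\log n$) already beats the union bound---contrary to your closing remark, the $\log n$ is what the paper's fine net needs, not your coarse one.
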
 

\begin{proof}
It suffices to prove for $v$ belonging to an $\eps$-net $\CN$ of the unit sphere $S$ in $V$, with $\eps: =\frac{1}{n+ \sigma_1}$. 
With such small $\eps$, the error coming from the term $(1-\eps)$ (in Lemma \ref{lemma:net}) 
is swallowed into the error term $O(n +\sigma_1 \sqrt {d \log n} )$. 

By Lemma \ref{lemma:net}, $|\CN| \le (\frac{3}{\eps})^d \le  \exp(C_1 d \log n) $, 
for some constant $C_1$ (which depends on the exponent $\beta_1$ in the upper bound of $\sigma_1$).
 Thus, using the union bound,  it suffices to show that if $C$ is large enough, then for any $v \in \CN$

$$\P (  \| (A+E) v\| ^2 \ge \sum_{i=1}^n (v \cdot v_i)^2 + C(n +\sigma_1 \sqrt{d \log n} ) ) \le \exp(-2C_1 d \log n ) $$ for any fixed $v \in \CN$. 

Fix $v \in \CN$. By \eqref{eqn:basic},  

$$ \| (A+E) v\| ^2=  \|Av\|^2   + \| Ev\| ^2 + 2 (Av) \cdot (Ev)  = \sum_{i=1}^n (v \cdot v_i)^2 \sigma_i^2 + \|Ev\|^2 + 2 (Av) \cdot (Ev). $$

Since $\|Av\| \le \sigma_1$, we have, by Lemma \ref{lemma:product}, that with probability at least  $1-\exp(-C_2 d \log n )$
$$|(Av) \cdot (Ev) | \le C \sigma_1 \sqrt {d \log n} ,$$ where $C_2$ increases with $C$. Thus, by 
choosing  $C$ sufficiently large, we can assume that 
$C_2 > 3C_1$. 

Furthermore, by Lemma \ref{lemma:normE},   $\| Ev\| \le  3\sqrt n$
with probability  at least $1-\exp(-\Omega (n))$. 
Combining this with the above bounds, we conclude that for a sufficiently large constant $C$

\begin{eqnarray*} 
 \P (  \| (A+E) v\| ^2 \ge \sum_{i=1}^n (v \cdot v_i)^2 + C(n +\sigma_1) ) 
&\le&  \exp(-3C_1 d \log n) +  \exp(-\Omega (n)) \\ &\le&  \exp(-2C_1 d \log n ) , \end{eqnarray*}

\noindent completing the proof. 
\end{proof}

\section {Proof of Theorem \ref{theorem:main1}}  \label{section:Proof1}


Let $H$ be the subspace spanned by $\{v_1, v_2\}$ and  $u_i (1 \le i \le n)$ be the singular vectors of 
the matrix $A^{\ast}$.




First, we  give a lower bound for  $\sigma_1' : =\| A+E\| $.  By the minimax principle, we have 

$$\sigma_1' =  \| A+E \| \ge | u_1^T (A+E) v_1 | =| \sigma_1  + u_1^T  E v_1| . $$

By Lemma \ref{lemma:product}, we have, with probability $1- o(1) $, 
$|u_1 ^T E v_1|  \le \log \log n $. (The choice of $\log \log n$ is not important. One can  replace it by any function that tends slowly  to infinity with $n$.)

Thus, 
we have, with probability $1- o(1) $, that 

\begin{equation} \label{eqn:lowerbound}  \| A+E \| \ge  \sigma_1  - \log \log n .  \end{equation}

Our main observation is that, with high probability,  any $v$ that is far from $v_1$ 
would yield $\| (A+E)v \| < \sigma_1 -\log \log n$. Therefore, the first singular vector $v_1'$ of 
$A+E$ must be close to $v_1$. 


Consider a unit   vector $v$ and write it as 

$$v =c_1 v_1 +  c_2 v_2 +\dots + c_r v_r+  c_0u    $$ where $u$ is a unit vector  orthogonal to $H:= \Span \{v_1, \dots, v_r\}  $ and $c_1^2+ \cdots + c_r^2+ c_0^2 =1$. 
Recall that $r$ is the rank of $A$, so $Au =0$.
Setting $w:= c_1v_1+\dots + c_r v_r$ and using Cauchy-Schwartz, we have

\begin{eqnarray*} 
\| (A+E)v\|^2 &=& \| (A+E)w + c_0 E u \|^2  \le \| (A+E) w \|^2 + 2 c_0 \| (A+E) w\| \| Eu\| + c_0^2 \| Eu\| ^2  \\
&\le& (1+ \frac{c_0^2}{4}) \| (A+E) w\|^2 + (4 + c_0^2) \| Eu \|^2. 
\end{eqnarray*} 

By Lemma \ref{lemma:normE}, we have, with probability $1- o(1)$, that 
$\| Eu\| \le  3\sqrt n)$ for every  unit vector $u$. 
Furthermore, by Lemma  \ref{lemma:subspace}, we have, with probability $1 -o(1) $, 

$$ \|(A+E) w \|^2 \le  \sum_{i=1}^r (w \cdot v_i)^2  + O(\sigma_1 \sqrt {r \log n} +n ) $$ for every  vector $w \in H$ of length at most $1$.

Since 

$$ \sum_{i=1}^r (w \cdot v_i)^2 =\sum_{i=1}^r c_i^2 \sigma_i^2 \le (1-c_0^2) \sigma_1^2  -(1-c_0^2- c_1^2) (\sigma_1^2-\sigma_2^2) , $$ we can conclude that 
 with probability $1 -o(1) $,  the first singular vector of $A+E$, written in the form 
 $v=c_1v_1+ \dots + c_r v_r + c_0u$, satisfies 
 
 \begin{equation} \label{eqn:upperbound} 
\frac{1}{1+ c_0^2/4}   \| (A+E) v\|^2 \le (1-c_0^2) \sigma_1^2  -(1-c_0^2- c_1^2) (\sigma_1^2-\sigma_2^2) + O(\sigma_1  \sqrt{r \log n} + n ). \end{equation} 

Notice that $c_0\le 1$, so the term $c_0 n$ is  swallowed into  $O(n)$. 
By \eqref{eqn:lowerbound} and the fact that 
$\frac{1}{1+ c_0^2} \ge 1 -\frac{c_0^2}{4}$, we have 

$$\frac{1}{1+ c_0^2/4}   \| (A+E) v\|^2 \ge (1 -\frac{c_0^2}{4}) (\sigma_1 -\log \log n)^2. $$

Comparing this with \eqref{eqn:upperbound} 
and noticing that both $\sigma_1 \log \log n $ and $(\log \log n)^2$ are $o( \sigma_1 \sqrt {r \log n})$, 
we obtain, for some properly chosen  constant $C$, that 

$$(1-c_1^2) \sigma_1 \delta   -\frac{c_0^2}{4} \sigma_1^2  \le -c_0^2 \sigma_2^2 +C (\sigma_1 \sqrt {r \log n}  + n ). $$

Before concluding the proof, let us derive a bound on $c_0$. We can show that with probability $1-o(1)$

\begin{equation} \label{boundc0} c_0^2 =O(\frac{\sqrt n}{\sigma_1}). \end{equation} 

To verify this, we again used the bound  $\| (A+E) v\| \ge \sigma_1 -\log \log n$. 
Oh the other hand, by the triangle inequality and Lemma \ref{lemma:normE}, 
we have with probability $1-o(1)$

$$\| (A+E) v\| \le \| Av\| + \|Ev\| \le \sqrt {1- c^2} \sigma_1 + 3 \sqrt n,$$ 
from which \eqref{boundc0} follows by a simple computation.

Without loss of generality, we can assume that $C\ge 1$. If 
$\sigma_2 \le \frac{1}{2} \sigma_1$, then $\delta \ge \frac{1}{2}\sigma_1$ and 

\begin{equation}  
\label{bound1} 1 -c_1^2 \le \frac{ C (\sigma_1 \sqrt {r \log n}  + n)}{\sigma_1^2/2}  +\frac{c_0^2}{2} 
 =O(\frac{\sqrt r \log n}{\sigma_1})  + O(\frac{n}{\sigma_1^2} ) + O(\frac{\sqrt n}{ \sigma_1}). 
 \end{equation} 

In the case   $\sigma_2  \ge \frac{1}{2} \sigma_1$, $c_0^2 \sigma_2^2 \ge \frac{c_0^2}{4} \sigma_1^2 $, so

$$(1-c_1^2) \sigma_1 \delta \le C (\sigma_1 \sqrt {r \log n} +n) $$ which implies 

\begin{equation}  \label{bound2} 
(1-c_1^2) \le C (\frac{\sqrt{r \log n} }{\delta} + \frac{ n}{\sigma_1 \delta}) . \end{equation}

Notice that  $\sin \angle (v_1,v_1') ^2 =  \sin\angle (v_1 , v)^2 =  1- c_1^2 $. 
The desired claim follows from \eqref{bound1} and \eqref{bound2}.

\begin{remark} One can improve the error term $\frac{\sqrt n}{\sigma_1}$ to $(\frac{\sqrt n}{\sigma_1})^{3/2}$. However, this proof is more technical and harder to generalize. \end{remark} 

\section{Proof of Theorem \ref{theorem:main2}}   \label{section:Proof2}

Similar to the previous proof, we start with  a lower bound for 
$\sigma_k'$, the $k$th largest singular value of $A+E$. Using the minimax principle, we have 

$$\sigma_k' \ge |u_k^{T} (A+E) v_k | \ge \sigma_k - \log \log n $$ with probability $1-o(1)$.

We need to consider $\| (A+E) v \|$ for a unit vector $v$ orthogonal to 
$v_1' , \dots, v_{k-1}'$. We write (as before) 

$$v := c_{1}  v_1 + \dots + c_{ r} v_r +  c_0 u = w + c_0 u . $$

If $v$ is the $k$th singular vector of $A+E$, then  $v \cdot v_j'= 0$ for $1 \le j \le k-1$,  and we obtain
$$|c_j| =  |v \cdot v_j| = | v \cdot (v_j- v_j') | \le |v_j -v_j'| \le  2 \sin \angle (v_j. v_j')  = 2  \eps_j . $$

As in the previous proof, we consider the inequality

\begin{eqnarray*} 
\| (A+E)v\|^2 &=& \| (A+E)w + c_0 E u \|^2  \le \| (A+E) w \|^2 + 2 c_0 \| (A+E) w\| \| Eu\| + c_0^2 \| Eu\| ^2  \\
&\le& (1+ \frac{c_0^2}{4}) \| (A+E) w\|^2 + (4 + c_0^2) \| Eu \|^2. 
\end{eqnarray*}

We split $w= \bar w_k + w_k$, where 
 $\bar w_k := c_1 v_1 + \dots + c_{k-1} v_{k-1}$ and $ w_k: =c_{k} v_k + \dots v_r c_r$.
We have 

$$\| (A+E) w\|^2 = \|(A+E)(\bar w_k +w_k)\|^2 \le 
\| (A+E) w_k \|^2 + \| (A+E) \bar w_k\|^2 + 2 \| (A+E) w_k\| \| (A+E) \bar w_k \|. $$

Using Lemma \ref{lemma:subspace}, we have 

\begin{equation} \| (A+E) w_k \|^2 \le c_k^2 v_k^2 + \dots + c_r^2 v_r^2 + O( \sigma_k \sqrt{r \log n} + n).
\end{equation} 

The term $\| (A+E) \bar w_k \|^2$ can be bounded, rather generously, by 

\begin{equation}  O( (\sigma_1 + \sqrt n)^2  (c_1^2 + \dots + c_{k-1}^2 )
= O(\sigma_1^2 + n) s_{k-1}^2) . \end{equation}  

Moreover, 

\begin{equation} \| (A+E) w_k\| \| (A+E) \bar w_k \| = 
O( (\sigma_k +\sqrt n)(\sigma_1+ \sqrt n) \| w_k\| \| bar w_k \| =O((\sigma_1+\sqrt n)(\sigma_k +\sqrt n)s_{k-1}  .
\end{equation}

Repeating the calculations in the previous proof, we have, with probability $1-o(1)$

$$(1-c_k^2) (\sigma_k^2 -\sigma_{k+1}^2)  -\frac{c_0^2}{4} \sigma_k^2 
\le \sum_{j=1}^{k-1} c_j^2 (\sigma_j^2 -\sigma_{k+1}^2) 
-c_0^2 \sigma_{k+1}^2 + O( \sigma_k \sqrt {r \log n}+ n )  + O(\sigma_1^2 s_{k-1}^2 +
(\sigma_1+\sqrt n)(\sigma_k +\sqrt n)s_{k-1} ) . $$

We can bound $c_0$ as follows 

\begin{equation} 
c_0^2 + s_{k-1}^2  =O( \frac{\sqrt n}{\sigma_k} + \frac{\sigma_1 s_{k-1} } {\sigma_k} ). 
\end{equation}

By considering the two cases $\sigma_{k+1} \ge \frac{1}{2}\sigma_k$ and 
$\sigma_{k+1} < \frac{1}{2} \sigma_k$,  the desired bound follows.

\section{Extensions} \label{section:extension}

In this section, we extend our results to other models of random matrices. It is easy to see that we 
did not rely ver heavily on properties of the Bernoulli random variable. All we need is a model of random matrices 
so that Lemmas \ref{lemma:normE} and \ref{lemma:product} (or sufficiently strong variants) hold. 

Both of these lemmas hold for the case where the noise is gaussian (instead of Bernoulli).  In fact, Lemma \ref{lemma:product} is trivial as $u^TEv$ has distribution  $N(0,1)$.

Both lemmas hold in the case the entries of $E$ is bounded by a universal constant $K$. For the proof of
Lemma \ref{lemma:normE}, see \cite{AKV, Meckes}. For the proof of Lemma \ref{lemma:product}, see 
Remark \ref{remark:product}. 

Quite often, the boundedness condition can be replaced by the condition of having a 
rapidly decaying tail (such as sub-gaussian), using either more advanced concentration 
tools (see \cite{Talbook}) or a truncation argument (see \cite{TVhard}). We do not pursuit these matters here.

We can also extend our results for a matrix $A$ which does not have low rank, but can be well approximate by one. In this case, we consider  $A= A'+B$, where $A'$ has small rank (say $r$) and $B$ is very small. In this case, we  can apply, say,  Theorem  \ref{theorem:main1} to bound $\|v_1 (A')-v_1(A'+E)\|$ and Theorem \ref{theorem:SS}  to bound $\| v_1(A') - v_1(A)\|$ and then use the triangle inequality.  
As a result, the RHS of \eqref{upper3} will have an extra term $\frac{\| B\| }{\delta}$. The reader is invited to work out the details. 

Finally, our analysis also extends fairly easily to the case when $E$ is a hermitian random matrix
(either Wigner or Wishart model)  and $A$ is hermitian. The details and few applications will appear elsewhere.

\appendix 

\section{Proof of Lemma \ref{lemma:product}} \label{section:product}

As $u^T E v = \sum_{i,j} u_j v_j \xi_{ij}$ where $u=(u_i)_{i=1}^n, v=(v_j)_{j=1}^n$ and the $\xi_{ij}$ 
are the entries of $E$, Lemma \ref{lemma:product} follows from 

\begin{lemma}  \label{lemma:sum} 
Let $S:= c_1 \xi_1 + \dots + c_n \xi_n$ where $\xi_i$ are iid Bernoulli random variables and $c_i$ are real numbers such as $\sum_{i=1}^n c_i^2 =1$. Then for any number $t > 0$

$$\P( |S|  \ge t ) \le 2 \exp (- t^2 /16). $$

\end{lemma}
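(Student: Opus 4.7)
\textbf{Proof plan for Lemma \ref{lemma:sum}.}

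The plan is to apply the standard Chernoff / moment-generating-function argument (a Hoeffding-type bound for Rademacher sums). First I would reduce the two-sided estimate to a one-sided one by noting that $-S$ has the same distribution as $S$, so it suffices to prove $\P(S \ge t) \le \exp(-t^2/16)$ and then union-bound.

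Next, for any $\lambda > 0$, by Markov's inequality applied to $e^{\lambda S}$ and independence of the $\xi_i$,
\begin{equation*}
\P(S \ge t) \le e^{-\lambda t}\, \E e^{\lambda S} = e^{-\lambda t}\prod_{i=1}^n \E e^{\lambda c_i \xi_i}.
\end{equation*}
The key elementary inequality is $\E e^{\lambda c_i \xi_i} = \cosh(\lambda c_i) \le e^{\lambda^2 c_i^2 / 2}$, which follows from comparing Taylor series term by term ($(2k)! \ge 2^k k!$). Using $\sum c_i^2 = 1$ this gives $\E e^{\lambda S} \le e^{\lambda^2/2}$, hence
\begin{equation*}
\P(S \ge t) \le \exp\!\Big(\tfrac{\lambda^2}{2} - \lambda t\Big).
\end{equation*}

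Finally I would optimize (or, since the stated constant $16$ is generous, just pick) $\lambda = t$, which yields $\P(S \ge t) \le e^{-t^2/2} \le e^{-t^2/16}$. Doubling to account for the lower tail gives the claim. There is no real obstacle here: the only step requiring any thought is the sub-gaussian MGF bound $\cosh(x) \le e^{x^2/2}$, and the loose constant $16$ in the statement leaves substantial slack, so no careful optimization of $\lambda$ is needed.
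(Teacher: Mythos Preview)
Your argument is correct and in fact cleaner than the paper's. The paper does not use the global inequality $\cosh(x)\le e^{x^2/2}$; instead it first splits off the indices with $|c_i|\ge 2/t$, bounds their contribution deterministically by Cauchy--Schwarz (at most $t/2$), and then runs the Laplace-transform argument only on the remaining sum $S'$, using the cruder bound $\E e^{zc_i\xi_i}\le 1+(zc_i)^2\le e^{z^2c_i^2}$, which is only asserted for $|zc_i|\le 1$ and is the reason for the preliminary splitting and for the final constant $16$. Your route avoids the split entirely and yields the sharper $e^{-t^2/2}$. The one thing the paper's version buys is that its MGF bound uses only $\E\xi_i=0$, $\E\xi_i^2=1$, and boundedness, so it extends verbatim to general bounded mean-zero variance-one $\xi_i$ (this is exactly the content of the remark following the lemma); your proof as written uses the Rademacher-specific identity $\E e^{x\xi}=\cosh x$, though of course Hoeffding's lemma would give the same extension with little extra work.
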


\begin{proof} 
Without loss of generality, we can assume that $|c_i|$ decreases and $l$ is the last index such that 
$|c_i| \ge \frac{2}{T}$.  As  $\sum_{i=1}^n c_i^2 =1$, $l \le t^2/4$. 
By Cauchy-Schwartz, 

$$| c_1 \xi_1 + \dots + c_l \xi_l|^2 \le l^2 \sum_{i=1}^n c_i^2 \le \frac{t^2}{4} , $$ 
which implies that with probability one $|c_1\xi_1 + \dots c_l \xi_l| \le \frac{t}{2} $. Therefore,

$$\P( |S| \ge t) \le \P( |S'| \le \frac{t}{2} ), $$ where $S':= \sum_{i=l+1}^n c_i \xi_i$. 

We can bound $ \P( |S'| \le \frac{t}{2} )$ by the standard Laplace-transform argument. Set $z:= t/4$. 
Thanks to independence, we  have 

$$\P( S' \ge \frac{t}{2}) = \P( \exp( zS') \ge e^{tz/2} ) \le e^{-tz/2} \E (\exp(zS') )= 
e^{-tz/2} \prod_{i=l+1}^n \E \exp(zc_i \xi_i)
. $$

On the other hand, as $|zc_i| \le 1$, it is easy to show that

$$\E \exp( zc_i \xi_i)  \le 1 + (zc_i)^2 \le \exp( z^2 c_i^2 ). $$

Together, we obtain 

$$\P ( S' \ge tz/2) \le e^{-tz/2} \exp( \sum_{i=l+1}^n z^2 c_i^2) \le \exp( z^2 -\frac{tz}{2})  =\exp( -\frac{t^2}{16}). $$

Similarly 

$$\P( S' \le -tz/2) = \P( -S' \ge tz/2) \le \exp(-\frac{t^2}{16}) , $$ concluding the proof. 
\end{proof}

\begin{remark}  \label{remark:product} 
The same proof works for $\xi$ being  arbitrary independent random variable with mean 0 and variance 1, 
uniformly bounded by a constant $K$. In this case, the constant $16$ is replaced by a constant depending on $K$. 
\end{remark}

\end{document}